\newtheorem{theorem}{Theorem}[section]
\newtheorem*{theorem*}{Theorem}
\newtheorem{lemma}[theorem]{Lemma}
\newtheorem{corollary}[theorem]{Corollary}
\theoremstyle{definition}
\newtheorem{definition}[theorem]{Definition}
\theoremstyle{remark}
\newtheorem{remark}[theorem]{Remark}
\numberwithin{equation}{section}
\newcommand{\pt}{\mathrm{pt}}
\newcommand\Sym{\mathfrak S}
\newcommand{\sign}{\mathrm{sign}}
\newcommand{\E}{\mathrm{E}}
\newcommand{\R}{\mathbb{R}}
\newcommand{\Z}{\mathbb{Z}}
\newcommand{\F}{\mathbb{F}}
\DeclareMathOperator{\vol}{vol}
\DeclareMathOperator{\cat}{cat}
\DeclareMathOperator{\const}{const}
\renewcommand{\int}{\mathop{\rm int}}
\renewcommand{\epsilon}{\varepsilon}
\begin{document}

\title[The Schwarz genus of the Stiefel manifold and counting geometric configurations]{The Schwarz genus of the Stiefel manifold and counting geometric configurations}

\author{Pavle~Blagojevi\'c}
\email{pavle.v.m.blagojevic@gmail.com}
\address{Pavle~Blagojevi\'c, Institut f\" ur Mathematik, FU Berlin, Arnimallee 2, 14195 Berlin, Germany; and Mathemati\v cki Institut SANU, Knez Michailova 35/1, 11001 Beograd, Serbia}
\thanks{The research of Pavle Blagojevi\'c leading to these results has received funding from the European Research Council under the European Union's Seventh Framework Programme (FP7/2007-2013) / ERC Grant agreement no.~247029-SDModels. He is also supported by the grant ON 174008 of the Serbian Ministry of Science and Environment.}

\author{Roman~Karasev}
\thanks{The research of R.N.~Karasev is supported by the Dynasty Foundation, the President's of Russian Federation grant MD-352.2012.1, the Russian Foundation for Basic Research grants 10-01-00096 and 10-01-00139, the Federal Program ``Scientific and scientific-pedagogical staff of innovative Russia'' 2009--2013, and the Russian government project 11.G34.31.0053.}
\email{r\_n\_karasev@mail.ru}
\address{Roman~Karasev, Department of Mathematics, Moscow Institute of Physics and Technology, Institutskiy per. 9, Dolgoprudny, Russia 141700; and Laboratory of Discrete and Computational Geometry, Yaroslavl' State University, Sovetskaya st. 14, Yaroslavl', Russia 150000}
\keywords{Schwarz genus, inscribing and outscribing}
\subjclass[2000]{55M20, 05D10, 20J06, 46B20, 52A21, 55M35}

\begin{abstract}
In this paper we compute: the Schwarz genus of the Stiefel manifold $V_k(\R^n)$ with respect to the action of the Weyl group $W_k:=(\Z/2)^{k}\rtimes\Sym_k$,
and the Lusternik--Schnirelmann category of the quotient space $V_k(\R^n)/W_k$.
Furthermore, these results are used in estimating the number of: critically outscribed parallelotopes around the strictly convex body,
and Birkhoff--James orthogonal bases of the normed finite dimensional vector space.
\end{abstract}

\maketitle

\begin{center}
 {\em Dedicated to Professor Ljubi\v sa Ko\v cinac on the occasion of his 65th birthday}
\end{center}

\section{Introduction}

The classical problem of estimating the number of the periodic billiard trajectories in a strictly convex body, introduced by G. Birkhoff in early 1990s, was recently studied in series of papers
by  Farber \cite{farber}, Farber \& Tabachnikov \cite{FarTab}, and Karasev \cite{kar2009billiard} via topological methods.
The problem of counting the number of periodic billiard trajectories was connected with the problem of determining the critical points of the appropriately defined length function on a cyclic
configuration space.
Then the number of critical points was estimated from below by the Lusternik--Schnirelmann category of the quotient of the cyclic configuration space.

In this paper we study the Schwarz genus of the Stiefel manifold $V_k(\R^n)$ with respect to the action of the Weyl group $W_k:=(\Z/2)^{k}\rtimes\Sym_k$ and
consequently the Lusternik--Schnirelmann category of the quotient manifold $V_k(\R^n)/W_k$, Section \ref{section:schwarzgenus}.
In particular, in Corollary \ref{corollary:st-ls} we prove that:
\[
g_{W_k}(V_k(\R^n)) = \cat V_k(\R^n)/W_k = nk - \tfrac{k(k+1)}{2} + 1.
\]
Following the spirit of arguing as in the billiard problem we use the result on the Lusternik--Schnirelmann category of the quotient manifold $V_k(\R^n)/W_k$ to estimate the number of:
critically outscribed parallelotopes and Birkhoff--James orthonormal bases, Section \ref{section:counting}.
We prove that:
{\em
\begin{compactitem}[$\bullet$]
 \item Every strictly convex body $K\subset \mathbb R^n$ has at least $\tfrac{n(n-1)}{2}+1$ distinct critically outscribed parallelotopes;
 \item Every smooth norm on  $\R^n$ has at least $\tfrac{n(n-1)}{2}+1$ Birkhoff--James orthonormal bases.
\end{compactitem}
}

\smallskip

\noindent{\bf Acknowledgments.} We are grateful to Aleksandra Dimitrijevi\'c Blagojevi\'c for careful reading of the manuscript and valuable suggestions.

\section{The Schwarz genus of the Stiefel manifold}
\label{section:schwarzgenus}

In this section we evaluate the Schwarz genus of the Stiefel manifold $V_k(\R^n)$ of orthonormal $k$-frames in $\R^n$ with respect to the action of the group $W_k:=(\Z/2)^{k}\rtimes\Sym_k$ and prove that
\[
 g_{W_k}(V_k(\R^n)) = \dim V_k(\R^n) + 1 = nk - \frac{k(k+1)}{2} + 1 .
\]
Moreover, as a consequence we obtain the following estimate for the Lusternik--Schnirelmann category of the unordered configuration space of the projective space
\[
 \cat B_k(\mathbb RP^n) \ge nk - \frac{k(k-1)}{2} + 1.
\]

\subsection{The Schwarz genus}
Following the original paper by Schwarz and adapting the notion of the genus of a regular fibration~\cite[Chapter V]{schw1966} we introduce the Schwarz genus for a free $G$-space.
For more details and applications consult \cite{bart1993} and \cite{vol2000}.

\begin{definition}
The \emph{Schwarz genus} $g_G(X)$ of a free $G$-space $X$ is the smallest number $n$ such that $X$ can be covered by $n$ open $G$-invariant subsets $X_1,\ldots, X_n$
with the property that for every $1\leq i\leq n$ there exists a $G$-equivariant map $X_i\to G$.
The Schwarz genus of a free $G$-space $X$ will be denoted by $g_G(X)$.
\end{definition}

For a paracompact $G$-space $X$ the Schwarz genus coincides with the smallest number $n$ such that there exists a $G$-equivarian map $X\to G^{*n}$
Here $G^{*n}$ denotes the $n$-fold join of the group $G$ with diagonal $G$-action. 
The group $G$ is considered as a $0$-dimensional simplicial complex.

Let $\E_rG$ denote a compact free $r$-dimensional $G$-space that is also $(r-1)$-connected.
The following natural generalization of the Borsuk--Ulam theorem
\begin{center}
\emph{There is no $G$-equivariant map $\E_rG\to\E_{r-1}G$,} 
\end{center}
implies the $g_G(\E_rG)=r+1$.

\medskip

The classical notion of Lusternik-Schnirelmann category of a topological space can be defined as follows.
\begin{definition}
The \emph{Lusternik--Schnirelmann category} $\cat X$ of a space $X$ is the smallest integer $n$ for which $X$ can be covered by $n$ open subsets $X_1, X_2,\ldots, X_{n}$
such that the inclusions $X_i \to X$ are nullhomotopic.
\end{definition}

The Schwarz genus of a free $G$-space and the Lusternik--Schnirelmann category of its quotient $X/G$ are connected via the following lemma.

\begin{lemma}
\label{lemma:ls-by-gen}
For a free $G$-space $X$ the following inequality holds
$$
\cat X/G \ge g_G(X).
$$
\end{lemma}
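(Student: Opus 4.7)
The plan is to pull back a categorical cover of $X/G$ to a genus cover of $X$. Let $p\colon X\to X/G$ be the quotient map, and suppose $X/G$ is covered by open sets $U_1,\ldots,U_n$ with each inclusion $\iota_i\colon U_i\hookrightarrow X/G$ nullhomotopic. Set $V_i:=p^{-1}(U_i)$; these are open $G$-invariant subsets that cover $X$, so it suffices to produce for each $i$ a $G$-equivariant map $V_i\to G$.

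The key step is to recognize $p\colon X\to X/G$ as a principal $G$-bundle (which holds under the mild hypotheses implicit in the Schwarz-genus setting, in particular for the free actions of the finite groups that appear later in the paper). Restricting this bundle over $U_i$ gives the principal $G$-bundle $V_i\to U_i$, which by construction is the pullback $\iota_i^*(X\to X/G)$. Since $\iota_i$ is homotopic to a constant map and principal bundles are homotopy invariant, this pullback is isomorphic to the pullback of $X\to X/G$ along a constant map, i.e., it is the trivial bundle $U_i\times G\to U_i$. Composing a trivialization $V_i\xrightarrow{\cong} U_i\times G$ with the projection onto the second factor yields the desired $G$-equivariant map $V_i\to G$.

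Collecting the $V_i$ produces a cover of $X$ by $n$ open $G$-invariant subsets admitting $G$-equivariant maps to $G$, so $g_G(X)\le n$. Taking $n=\cat(X/G)$ gives the claimed inequality.

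The main obstacle is the bundle-theoretic step: one needs the principal-bundle structure on $p$ and homotopy invariance of pullbacks to conclude that a nullhomotopic inclusion produces a trivial restricted bundle. Once this is in place, the proof is a direct pull-back argument, and no cohomological or dimension input is required.
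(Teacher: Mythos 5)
Your proposal is correct and follows essentially the same route as the paper: pull back the categorical cover along the quotient map, observe that nullhomotopy of the inclusions forces each restricted covering $p^{-1}(U_i)\to U_i$ to be trivial, and project a $G$-equivariant trivialization onto the $G$-factor. The only difference is cosmetic --- the paper simply asserts the triviality of these coverings, whereas you justify it via homotopy invariance of principal-bundle pullbacks, making explicit the mild point-set hypotheses the paper leaves implicit.
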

\begin{proof}
Let $m = \cat X/G$ and $Y_1\cup \dots \cup Y_m$ be the corresponding open covering of $X/G$ with null-homotopic inclusions $Y_i\subseteq X/G$. 
Then the maps $X_i = \pi^{-1}(Y_i)\to Y_i$ defined as restrictions of the natural projection $\pi : X \to X/G$ are trivial coverings.
This means that there exist $G$-equivariant homeomorphisms $X_i\to Y_i\times G$ where the actions on $Y_i\times G$ are given by $g\cdot (y,h):=(y,gh)$.
Composing these homeomorphisms with the projection on the second factor we get $G$-equivariant maps $X_i\to G$.
Thus, by the definition of the Schwarz genus $g_G(X)\le m$.
\end{proof}

Use of the Schwarz genus is sometimes more convenient, compared to the Lusternik--Schnirelmann category, because the Schwarz genus is monotone with respect to inclusions of $G$-invariant spaces.
This property will be essentially used in the proofs of Corollaries \ref{corollary:proj-conf-ls}, \ref{corollary:outscr-paral} and \ref{corollary:bj-bases}.

One of the main tools to estimate the Schwarz genus and consequently the Lusternik--Schnirelmann category of the quotient is the following cohomological criterion that is a particular case of \cite[Theorem 12, page 91]{schw1966}.
\begin{lemma}
\label{lemma:gen-by-coh}
Let $G$ be a finite group, $R$ be a commutative ring with unit, $M$ be an $R[G]$-module and $n>0$ be an integer.
If the map in the equivariant cohomology $\pi^*:H^n_G(\pt;M)\to H_G^n(X;M)$ is nonzero, then
$$
g_G(X)\ge n+1.
$$
The map $\pi^*$ is induced by the $G$-equivariant projection $\pi:X\to\pt$.
\end{lemma}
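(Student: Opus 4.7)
The plan is to prove the contrapositive: if $g_G(X)\le n$, then the pullback $\pi^*:H^n_G(\pt;M)\to H^n_G(X;M)$ is zero. The starting point is the characterization recorded earlier in the excerpt, namely that for a paracompact free $G$-space the Schwarz genus equals the smallest number $n$ admitting a $G$-equivariant map $X\to G^{*n}$. Thus from $g_G(X)\le n$ one obtains an equivariant map $f:X\to G^{*n}$, and the constant projection $\pi:X\to\pt$ factors $G$-equivariantly as $X\xrightarrow{f}G^{*n}\to\pt$.

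Passing to Borel constructions and applying $H^n_G(-;M)$, the pullback $\pi^*$ factors through $H^n_G(G^{*n};M)$, so the task reduces to showing that this middle group vanishes. The key observation is a dimension count: viewing the finite group $G$ as a $0$-dimensional complex, the $n$-fold join $G^{*n}$ is a free $G$-CW complex of dimension $n-1$, and since the action is free its Borel construction $\E G\times_G G^{*n}$ is homotopy equivalent to the orbit space $G^{*n}/G$, which is still $(n-1)$-dimensional. Hence $H^n_G(G^{*n};M)\cong H^n(G^{*n}/G;\tilde M)=0$ for any local coefficient system $\tilde M$ on the quotient induced by the $R[G]$-module $M$, and therefore $\pi^*=0$.

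The argument is essentially formal once the join characterization of the genus is granted. The one subtlety to watch is the treatment of coefficients: since $M$ is a general $R[G]$-module, $H^n_G(-;M)$ must be read as cohomology of the Borel construction with the associated local system, but on free $G$-spaces this collapses to ordinary cohomology of the orbit space with the induced local coefficients, and the dimensional vanishing used above is insensitive to this twisting. I do not anticipate any further obstacle.
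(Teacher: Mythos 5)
Your argument is correct, but note that the paper itself contains no proof of this lemma: it is quoted as a particular case of Schwarz's Theorem 12 \cite[Theorem 12, page 91]{schw1966}, so your write-up is a self-contained reconstruction of the cited result rather than a parallel to an in-paper proof. What you give is in fact the standard argument behind Schwarz's criterion in the equivariant setting: from $g_G(X)\le n$ extract a $G$-map $f\colon X\to G^{*n}$, factor $\pi^*$ through $H^n_G(G^{*n};M)$, and kill that group by the dimension count. All the ingredients you invoke are sound: $G^{*n}$ is a free $(n-1)$-dimensional $G$-CW complex, so the Borel construction $\E G\times_G G^{*n}$ is homotopy equivalent to $G^{*n}/G$, and cellular cochains with an arbitrary local coefficient system vanish above the dimension of the complex, so $H^n_G(G^{*n};M)=0$ and $\pi^*=f^*\circ q^*=0$. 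Two small points are worth recording explicitly. First, the step from the open-cover definition of the genus to the existence of $f\colon X\to G^{*n}$ needs a partition of unity subordinate to the invariant cover, i.e., paracompactness of $X$; the paper states this equivalence only for paracompact spaces, and the lemma as formulated carries no such hypothesis, so you are implicitly strengthening the hypotheses (harmlessly, since every space to which the lemma is applied in the paper is a manifold, but it should be said). Second, freeness of the $G$-action on $G^{*n}$ is what justifies the collapse of its Borel construction to the orbit space; since you phrase everything else in Borel equivariant cohomology, that is the only place freeness enters your argument, and it holds by construction of the join. With these caveats noted, your proof is complete and is essentially the proof underlying the reference the paper cites.
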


\subsection{The Schwarz genus of the Stiefel manifold of orthonormal $k$-frames in $\R^d$}
\label{subsection:2.2}

Let $V_k(\R^n)$ be the Stiefel manifold of all orthonormal $k$-frames in $\R^n$, and $G\subseteq \mathrm{O}(k)$.
Then $G$ acts naturally on the Stiefel manifold $k$-frames by
\[
(v_{1},\ldots ,v_{k})\cdot g=\Big( \sum_{j=1}^k v_{j}\gamma_{j1},\ldots,\sum_{j=1}^k v_{j}\gamma_{jk}\Big).
\]
Here $(v_{1},\ldots ,v_{k})\in V_k(\R^n)$ and $g=\big(\gamma_{ij}\big)_{i,j=1}^{k}\in \mathrm{O}(k)$.
The action is from the right, but it transforms into a left action by $g\cdot (v_{1},\ldots ,v_{k}):=(v_{1},\ldots ,v_{k})\cdot g^{-1}$.

We consider the action of the group $W_k:=(\Z/2)^{k}\rtimes\Sym_k\subset \mathrm{O}(k)$ on the Stiefel manifold $V_k(\R^n)$.
Let $\varepsilon_1,\ldots ,\varepsilon_n$ be generators of the component $(\Z/2)^{n}$ and $\pi\in\Sym_k$.
Then for $(v_1,\ldots ,v_k) \in V_k(\R^n)$ we set
\[
\varepsilon_i\cdot (v_{1},\ldots ,v_k) = (v_1^{\prime },\ldots ,v_k^{\prime }),
\]
where $v_i^{\prime}=-v_i$ and $v_{j}^{\prime }=v_{j}$ for all $j\neq i$, and
\[
\pi \cdot (v_1,\ldots ,v_k) = ( v_{\pi (1)},\ldots ,v_{\pi (k)}).
\]

The space $M_{k}$ of all real $k\times k$-matrices can be considered as a real $\mathrm{O}(k)$-representation with respect to the matrix conjugation action.
Consequently, $M_{k}$ is also a real $W_k$-representation.
We consider the following real $W_k$-subrepresentations of $M_{k}$:
\begin{compactitem}[$\circ$]
\item $R_k$ the space of all symmetric $k\times k$ matrices with zeroes on the diagonal, and
\item $P_k$ the space of all $k\times k$ matrices with zeroes outside the diagonal.
\end{compactitem}
Notice that $\dim R_k=\tfrac{k(k-1)}{2}$ and $\dim P_k=k$.

Let $X$ be a free $G$-space, and $V$ be a $d$-dimensional real $G$-representation.
Then $\xi_{X,V}$ denotes the following flat vector bundle
\[
V\rightarrow X\times_{G} V \rightarrow X/G,
\]
and $w_{X,V}:=w_d(\xi_{X,V})\in H^{d}(X/G;\F_2)\cong H^{d}_G(X;\F_2)$ its top Stiefel--Whitney class.

\begin{theorem}
\label{theorem:st-top-coh}
Let $n\geq 3$, $1\leq k\leq n$, $d:=\dim V_k(\R^n)= nk - \tfrac{k(k+1)}{2}$ and $U:=R_k\oplus P_k^{\oplus (n-k)}$.
The map in equivariant cohomology induced by the natural projection $\pi:X\to\pt$:
\[
\pi^*:H^{d}_{W_k}(\pt;\F_2)\to H^{d}_{W_k}(V_k(\R^n);\F_2)
\]
is non-zero.
In particular,
\[
 w_{V_k(\R^n),U}\in H^{d}(V_k(\R^n)/W_k;\F_2)\cong H^{d}_{W_k}(V_k(\R^n);\F_2)
\]
does not belong to the kernel of $\pi^*$, i.e., is the generator of the group $H^{d}(V_k(\R^n)/W_k;\F_2)$.
\end{theorem}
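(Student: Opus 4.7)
The plan is to interpret $w_{V_k(\R^n),U}$ as the mod-$2$ Euler class of a rank-$d$ real bundle on the closed smooth $d$-manifold $V_k(\R^n)/W_k$. The action of $W_k$ on $V_k(\R^n)$ is free (no non-trivial signed permutation stabilises an orthonormal frame), so the quotient is a closed $d$-manifold and $H^d(V_k(\R^n)/W_k;\F_2)\cong\F_2$; for a rank-$d$ real bundle on a closed $d$-manifold the top Stiefel--Whitney class equals, mod $2$, the number of zeros of any transverse section. A section of $\xi_{V_k(\R^n),U}$ is precisely a $W_k$-equivariant map $V_k(\R^n)\to U$, and its zero locus descends to $V_k(\R^n)/W_k$ by collapsing free $W_k$-orbits. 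Hence it is enough to exhibit one transverse $W_k$-equivariant map $f:V_k(\R^n)\to U$ whose zero set is a single free $W_k$-orbit: this will give $w_{V_k(\R^n),U}\neq 0$, and by functoriality $w_{V_k(\R^n),U}=\pi^{*}(w_d(U))$ with $w_d(U)\in H^{d}_{W_k}(\pt;\F_2)$, so $\pi^{*}$ is non-zero.

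Pick an orthonormal basis $e_1,\dots,e_n$ of $\R^n$ and a symmetric $T:\R^n\to\R^n$ diagonal in this basis with pairwise distinct eigenvalues $\lambda_1<\dots<\lambda_n$, and set
\[
f_1(v_1,\dots,v_k):=\bigl(\langle v_i,Tv_j\rangle\bigr)_{1\le i<j\le k}\in R_k,\qquad
f_2(v_1,\dots,v_k):=\bigl(\langle v_i,e_{k+l}\rangle\bigr)_{1\le i\le k,\,1\le l\le n-k}\in P_k^{\oplus(n-k)}.
\]
A direct inspection of the formulas shows $f=(f_1,f_2)$ is $W_k$-equivariant: a permutation $\pi\in\Sym_k$ permutes the two indices in $f_1$ and the first index in $f_2$, while $\varepsilon_i$ negates row and column $i$ of $f_1$ and negates the $i$-th row of $f_2$ -- matching the induced $W_k$-action on $R_k$ (conjugation) and on each copy of $P_k$ (standard signed permutation).

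The zero set is easily described. The condition $f_2=0$ forces every $v_i$ to lie in $L:=\mathrm{span}(e_1,\dots,e_k)$, so the frame is an orthonormal basis of $L$, and then $f_1=0$ says this basis diagonalises $T|_L=\mathrm{diag}(\lambda_1,\dots,\lambda_k)$. Since the spectrum of $T|_L$ is simple, such bases are exactly the signed permutation frames $(\pm e_{\sigma(1)},\dots,\pm e_{\sigma(k)})$, $\sigma\in\Sym_k$ -- a single free $W_k$-orbit of size $|W_k|$. Transversality at $V_0=(e_1,\dots,e_k)$ is a short computation using the standard description $T_{V_0}V_k(\R^n)=\{(a_{lj}): a_{ji}+a_{ij}=0 \text{ for } i,j\le k\}$ via $\dot v_i=\sum_l a_{li}e_l$: the derivative $df_1(V_0)$ sends the independent skew entries $a_{ij}$, $i<j\le k$, to $(\lambda_i-\lambda_j)a_{ij}$, and $df_2(V_0)$ is the identity on the free lower-block entries $a_{k+l,i}$; both blocks are bijective onto their targets, so $df(V_0)$ is an isomorphism between $d$-dimensional spaces. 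Equivariance then propagates transversality to the entire orbit of zeros.

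The zero locus thus projects to a single transverse point in $V_k(\R^n)/W_k$, giving $w_{V_k(\R^n),U}=1$ in $H^d(V_k(\R^n)/W_k;\F_2)\cong\F_2$ and hence $\pi^{*}$ non-zero. The key technical step is the transversality calculation, which depends crucially on the simplicity of the spectrum of $T$; a close second is matching carefully the $W_k$-representation structure of $U=R_k\oplus P_k^{\oplus(n-k)}$ to the equivariance of the explicit map $f$.
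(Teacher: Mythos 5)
Your proof is correct, and its geometric core is exactly the paper's: your test map $f=(f_1,f_2)$, built from a symmetric operator with simple spectrum on $\mathrm{span}(e_1,\dots,e_k)$ together with the last $n-k$ coordinate functionals, is precisely the general-position map $\tau=(\tau_0,\tau_1,\dots,\tau_{n-k})$ of the paper, with the same zero set consisting of a single nondegenerate free $W_k$-orbit. The surrounding machinery, however, is genuinely different. The paper equips $V_k(\R^n)$ with a free $W_k$-CW structure, identifies $w_{V_k(\R^n),U}$ with the equivariant (mod $2$) primary obstruction to an equivariant map $V_k(\R^n)\to S(U)$, computes the twisted-coefficient group $H^{d}_{W_k}(V_k(\R^n);\mathcal{Z})\cong\Z/2$ via equivariant Poincar\'e duality and a coinvariants calculation, and then evaluates the integral obstruction cocycle on the top cell. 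You instead stay entirely mod $2$: an equivariant map $V_k(\R^n)\to U$ is a section of $\xi_{V_k(\R^n),U}$, and the evaluation of the top Stiefel--Whitney class of a rank-$d$ bundle over the closed connected $d$-manifold $V_k(\R^n)/W_k$ on the mod-$2$ fundamental class is the parity of the zeros of a transverse section; one transverse zero gives $w_{V_k(\R^n),U}\neq 0$, and $w_{V_k(\R^n),U}=\pi^*(w_d(U))$ yields the claim about $\pi^*$. Your route is shorter and more elementary --- it bypasses the CW-structure, the obstruction theory, and the twisted Poincar\'e duality step --- and it actually supplies the nondegeneracy verification ($df_1$ acting by $\lambda_i-\lambda_j$ on the skew block, $df_2$ the identity on the lower block) that the paper only asserts with ``it can be verified that the solution is non-degenerated.'' What the paper's heavier route buys is the finer integral information: $H^{d}_{W_k}(V_k(\R^n);\mathcal{Z})\cong\Z/2$ with the integral primary obstruction as generator and the coefficient reduction $\mathcal{Z}\to\F_2$ an isomorphism in degree $d$, which is the form relevant to nonexistence of equivariant maps $V_k(\R^n)\to S(U)$ and is not recovered by your mod-$2$ count. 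One incidental merit of your write-up: you take the $W_k$-action on each $P_k$ summand to be the standard signed-permutation action on $\R^k$, which is what equivariance of $f_2$ (the paper's $\tau_j$) actually requires; under the paper's stated matrix-conjugation action on diagonal matrices the sign generators $\varepsilon_i$ would act trivially, so your reading silently corrects a small imprecision in the paper's setup.
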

\begin{proof}
We prove the theorem in the following steps:
\begin{compactitem}[$\circ$]
\item we first equip the Stiefel manifold $V_k(\R^n)$ with the free $W_k$-CW-structure, then

\item identify the Stiefel--Whitney class $w_{V_k(\R^n),U}$ with the equivariant {\rm (mod 2)}-primary obstruction
\[
 \gamma_{\Z/2}^{W_k}(V_k(\R^n),S(U))\in H^{d}_{W_k}(V_k(\R^n);\F_2),
\]
and finally

\item prove that the equivariant primary obstruction which decides the existence of $W_k$-equivariant map $V_k(\R^n)\to S(U)$:
\[
 \gamma^{W_k}(V_k(\R^n),S(U))\in H^{d}_{W_k}(V_k(\R^n);\mathcal{Z}),
\]
being also the preimage of $\gamma_{\Z/2}^{W_k}(V_k(\R^n),S(U))$ under the coefficient reduction $W_k$-morphism $\mathcal{Z}\to\F_2$, is the generator of the group
$H^{d}_{W_k}(V_k(\R^n);\mathcal{Z})\cong\Z/2$. Moreover,
\item our proof will yield that coefficient reduction $W_k$-morphism $\mathcal{Z}\to\F_2$ induces an isomorphism of the groups
\[
 H^{d}_{W_k}(V_k(\R^n);\F_2)\cong H^{d}_{W_k}(V_k(\R^n);\mathcal{Z}).
\]

\end{compactitem}
Here $\mathcal{Z}$ denotes the homotopy group $\pi_{d-1}(S(U))$ of the sphere $S(U)$ considered as a $W_k$-module.
In particular, it is important to deduce that for any generator $\varepsilon_i$ of the subgroup $(\Z/2)^k$ of $W_k$ and $z\in\mathcal{Z}$ the $W_k$-action is described by $\varepsilon_i\cdot z=(-1)^{n-1}z$.

\smallskip

\noindent
\textbf{(1)}
The Stiefel manifold $V_k(\R^n)$ is a $d$-dimension smooth closed $W_k$-manifold.
The smooth structure on $V_k(\R^n)$ can be assumed to be invariant with respect to the action of $W_k$.
Thus, the quotient manifold $V_k(\R^n)/W_k$ is a $d$-dimensional smooth closed manifold.
Therefore, $V_k(\R^n)/W_k$ can be triangulated \cite[Theorem, page 389]{cairns}.
Moreover, for a given point $x\in V_k(\R^n)/W_k$ the triangulation can be chosen in such a way that $x$ belongs to an interior of some maximal cell.
Since the action of the group $W_K$ on $V_k(\R^n)$ is free, the triangulation of the quotient $V_k(\R^n)/W_k$ transforms the Stiefel manifold $V_k(\R^n)$
into a free $W_k$-CW-complex.
The $W_k$-CW-structure on $V_k(\R^n)$ has an additional property that the preimages of $x$, with respect to the quotient map $V_k(\R^n)\to V_k(\R^n)/W_k$, belong to interiors of different maximal cells.

\smallskip

\noindent
\textbf{(2)}
Since $V_k(\R^n)$ is a $d$-dimensional connected free $W_k$-CW-complex and $\dim U=d$, then
\begin{compactitem}[-]
\item using \cite[Lemma 5.3]{BlLuZi2012} the Stiefel--Whitney class $w_{V_k(\R^n),U}$ can be identified with the equivariant {\rm (mod~2)}-primary obstruction
\[
 \gamma_{\Z/2}^{W_k}(V_k(\R^n),S(U))\in H^{d}_{W_k}(V_k(\R^n);\F_2)\cong H^{d}(V_k(\R^n)/W_k;\F_2),
\]
\item the quotient space $V_k(\R^n)/W_k$ is a compact manifold, and consequently
\[
 H^{d}(V_k(\R^n)/W_k;\F_2)\cong\F_2.
\]
\end{compactitem}
The equivariant {\rm (mod 2)}-primary obstruction $\gamma_{\Z/2}^{W_k}(V_k(\R^n),S(U))$ is the image of the equivariant primary obstruction $\gamma^{W_k}(V_k(\R^n),S(U))\in H^{d}_{W_k}(V_k(\R^n);\mathcal{Z})$
under the coefficient reduction $W_k$-morphism $\mathcal{Z}\to\F_2$, \cite[Section 5.1]{BlLuZi2012}.
The ambient group of the obstruction can be identified vie the Equivariant Poincar\'e duality, as explained in \cite[Theorem 1.4, page 2638]{BlaBla},
\[
H^{d}_{W_k}(V_k(\R^n);\mathcal{Z})\cong H_{0}^{W_k}(V_k(\R^n);\mathcal{Z}\otimes\mathcal{V})
\]
where $\mathcal{V}\cong_{\mathrm{Ab}}\Z$ is the orientation character of the Stiefel manifold $V_k(\R^n)$ with respect to the group $W_k$.
The action of $W_k$ on $\mathcal{V}$ is given by
\[
\varepsilon_i\cdot v=(-1)^n v\text{~~~~~,~~~~~}\pi\cdot v=(\sign(\pi))^n v
\]
for $v\in\mathcal{V}$, $\varepsilon_i$ a generator of the subgroup $(\Z/2)^k$ and $\pi\in\Sym_k\subset W_k$.
Here $\sign(\pi)\in\{1,-1\}$ denotes the sign of permutation $\pi$.

In the case when the Stiefel manifold $V_k(\R^n)$ is simply connected we have a sequence of isomorphisms, as in 
\cite[Section 1.5, page 2639]{BlaBla},
\[
 H_{0}^{W_k}(V_k(\R^n);\mathcal{Z}\otimes\mathcal{V})\cong H_{0}(W_k;\mathcal{Z}\otimes\mathcal{V})\cong (\mathcal{Z}\otimes\mathcal{V})_{W_k},
\]
where the last group is the group of $W_k$-coinvariants of the $W_k$-module $\mathcal{Z}\otimes\mathcal{V}$.
Since every element $\varepsilon_i$ of the group $W_k$ acts on the $W_k$-module $\mathcal{Z}\otimes\mathcal{V}$ by multiplication by ``$(-1)$''
we get that the group of coinvariants is isomorphic with $\Z/2$.
When the Stiefel manifold $V_k(\R^n)$ is not simply connected, for $k\in\{n-1,n\}$, direct calculation yields the same conclusion.

Thus, we have proved that
\[
 H^{d}_{W_k}(V_k(\R^n);\mathcal{Z})\cong\Z/2.
\]

\smallskip

\noindent
\textbf{(3)} Finally, we evaluate the equivariant primary obstruction $\gamma^{W_k}(V_k(\R^n),S(U))$ using the general position map scheme \cite[Definition 1.5, page 2639]{BlaBla} and already described
$W_k$-CW-structure on $V_k(\R^n)$ where the distinguished point $x\in V_k(\R^n)/W_k$ will be chosen later.

Let $\R^k\subset\R^n$ be the standard inclusion and $x_1,\ldots,x_n:\R^n\to\R$ denote coordinate functionals.
Consider a symmetric quadratic form $\phi\colon\R^n\times\R^n\to\R$ that has generic restriction $\phi|_{\R^k\times\R^k}$, i.e.,
the restriction form $\phi|_{\R^k\times\R^k}$ has pairwise distinct eigenvalues.
The equivariant map in general position $\tau=(\tau_0,\tau_1,\ldots ,\tau_{n-k})\colon V_k(\R^n)\to U=R_k\oplus P_k^{\oplus (n-k)}$ is defined by
\[
 \tau_0(v_1,\ldots,v_k)=[\phi(v_i,v_j)]_{1\leq i<j\leq k},
\]
and for $1\leq j\leq n-k$ by
\[
 \tau_j(v_1,\ldots,v_k)=(x_{k+j}(v_1),\ldots,x_{k+j}(v_k)).
\]

Now we are interested in the solution of the system of equations $\tau(v_1,\ldots,v_k)=0\in U$ modulo the group action $W_k$.
The subsystem of equations
\begin{eqnarray*}
\tau_1(v_1,\ldots,v_k)&=&(x_{k+1}(v_1),\ldots,x_{k+1}(v_k)) =0 \\
\tau_2(v_1,\ldots,v_k)&=&(x_{k+2}(v_1),\ldots,x_{k+2}(v_k)) =0 \\
\ldots                & & \ldots			      \\
\tau_{n-k}(v_1,\ldots,v_k)&=&(x_{n}(v_1),\ldots,x_{n}(v_k)) =0
\end{eqnarray*}
implies that the $k$-frame $v_1,\ldots,v_k$ belongs to $\R^k$.
The condition of generic restriction on $\phi$ with the remaining set of equations $\tau_0(v_1,\ldots,v_k)=0\in R_k$ implies that the restricted symmetric quadratic form $\phi|_{\R^k\times\R^k}$
on the solution frame $v_1,\ldots,v_k$ has to be diagonal.
Thus, a solution of the  system of equations $\tau(v_1,\ldots,v_k)=0\in U$  exists and solution frame $v_1,\ldots,v_k$ is unique up to the action of the group $W_k$.
Moreover, it can be verified that the solution is non-degenerated.

If the $W_k$-CW-structure on $V_k(\R^n)$ is chosen in such a way that the distinguished point is the solution frame $v_1,\ldots,v_k$, then the equivariant obstruction cocycle induced by the map $\tau$,
that lives in $C^{d}_{W_k}(V_k(\R^n);\mathcal{Z})$, evaluates non-trivially only on the cell that contains distinguished point.
More precisely, after suitable choice of orientations it evaluates to $1\in\mathcal{Z})$.
Thus, the equivariant primary obstruction $\gamma^{W_k}(V_k(\R^n),S(U))$ is the generator of the group $H^{d}_{W_k}(V_k(\R^n);\mathcal{Z})$ and therefore does not vanish.
This concludes the proof of the theorem.
\end{proof}

A direct consequence of the previous theorem is the following lower bound estimate for the Schwarz genus of the Stiefel manifold.

\begin{corollary}
\label{corollary:st-ls}
\[
g_{W_k}(V_k(\R^n)) = \cat V_k(\R^n)/W_k = \dim V_k(\R^n) + 1 = nk - \tfrac{k(k+1)}{2} + 1.
\] 
\end{corollary}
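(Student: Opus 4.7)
The plan is to squeeze the three quantities $g_{W_k}(V_k(\R^n))$, $\cat V_k(\R^n)/W_k$, and $d+1 := nk - \tfrac{k(k+1)}{2}+1$ between matching lower and upper bounds, with all of the real work already packaged in Theorem \ref{theorem:st-top-coh}.

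First, I would deduce the lower bound for the Schwarz genus directly from Theorem \ref{theorem:st-top-coh} and Lemma \ref{lemma:gen-by-coh}. The theorem asserts that the equivariant projection induces a non-zero map
\[
\pi^*\colon H^{d}_{W_k}(\pt;\F_2)\to H^{d}_{W_k}(V_k(\R^n);\F_2),
\]
so Lemma \ref{lemma:gen-by-coh} applied with $G=W_k$, $R=M=\F_2$, and $n=d$ gives immediately
\[
g_{W_k}(V_k(\R^n)) \;\geq\; d+1.
\]
Next, Lemma \ref{lemma:ls-by-gen} transports this to the Lusternik--Schnirelmann category of the quotient, yielding
\[
\cat V_k(\R^n)/W_k \;\geq\; g_{W_k}(V_k(\R^n)) \;\geq\; d+1.
\]

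For the matching upper bound, I would invoke the classical dimensional estimate $\cat Y \leq \dim Y + 1$, which holds for every connected CW-complex. In step (1) of the proof of Theorem \ref{theorem:st-top-coh} the quotient $V_k(\R^n)/W_k$ was already shown to be a compact connected smooth $d$-manifold, hence a $d$-dimensional CW-complex, so this standard bound gives
\[
\cat V_k(\R^n)/W_k \;\leq\; d+1.
\]
Combining the three inequalities forces all of them to be equalities, and $d+1 = nk - \tfrac{k(k+1)}{2}+1$ is the claimed value.

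No serious obstacle remains at this stage: the cohomological non-vanishing in Theorem \ref{theorem:st-top-coh} has absorbed the entire obstruction-theoretic difficulty, and both auxiliary ingredients (the Schwarz-type inequality $g_G \geq n+1$ when $\pi^*$ is non-zero in degree $n$, and the dimensional cap $\cat \leq \dim + 1$) are completely standard. The only micro-check worth making explicit is that $V_k(\R^n)/W_k$ is connected, which is immediate for $n\geq 2$ because $V_k(\R^n)$ is connected and $W_k$ acts on it.
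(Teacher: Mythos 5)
Your proposal is correct and matches the paper's own proof essentially verbatim: the paper likewise combines Theorem~\ref{theorem:st-top-coh} with Lemma~\ref{lemma:gen-by-coh} to get $g_{W_k}(V_k(\R^n)) \ge nk - \tfrac{k(k+1)}{2}+1$, passes to $\cat V_k(\R^n)/W_k$ via Lemma~\ref{lemma:ls-by-gen}, and closes the sandwich with the dimensional upper bound $\cat \le \dim + 1$ (cited from \cite[Theorem 1.7, page 4]{CLOT}). Your extra remark on connectedness of the quotient is a harmless refinement the paper leaves implicit.
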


\begin{proof}
From Lemmas~\ref{lemma:gen-by-coh} and \ref{lemma:ls-by-gen} we get:
\[
\cat V_k(\R^n)/W_k \ge g_{W_k}(V_k(\R^n)) \ge nk - \tfrac{k(k+1)}{2} + 1.
\]
On the other hand the dimension bounds the Lusternik--Schnirelmann category from above \cite[Theorem 1.7, page 4]{CLOT} and thus
\[
\cat V_k(\R^n)/W_k \le \dim V_k(\R^n) + 1 = nk - \tfrac{k(k+1)}{2} + 1.
\]
Therefore, the equality holds in both cases.
\end{proof}

\subsection{The Lusternik--Schnirelmann category of the unordered configuration space of the projective space}

Let $G$ be a finite group that acts on the topological space $X$, and $k>1$ be an integer.
The $G$-\emph{ordered configuration space} of $k$ pairwise distinct points in $X$ is defined to be:
\[
 F_G(X,k):=\{ (x_1,\ldots, x_k)\in X^k:x_i\neq g\cdot x_j\text{~for~}i\neq j\text{~and~}g\in G\}.
\]
The symmetric group $\Sym_k$ acts naturally on the $G$-ordered configuration space by permuting the factors.
The quotient space $B_G(X,k):=F_G(X,k)/\Sym_k$ is called the $G$-\emph{unordered configuration space} of $k$ pairwise distinct points in $X$.
When $G$ is the trivial group, the $G$-ordered configuration space $F_G(X,k)$ coincides with the classical ordered configuration space $F(X,k)$; and similarly for unordered configuration space $B(X,k)$.

Using the results of the previous section we give a lower bound on the Lusternik--Schnirelmann category of the unordered configuration space of the projective space $B(\R P^d,k)$ when $d\geq 3$.

\begin{corollary}
\label{corollary:proj-conf-ls}
For $d\geq 3$:
$$
\cat B(\R P^d,k) \ge dk - \tfrac{k(k-1)}{2} + 1.
$$ 
\end{corollary}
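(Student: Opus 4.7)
The plan is to compare $B(\mathbb{R}P^d,k)$ to the quotient $V_k(\mathbb{R}^{d+1})/W_k$ by exploiting the monotonicity of the Schwarz genus (which, unlike the Lusternik--Schnirelmann category, behaves well under $G$-invariant inclusions). The key geometric observation is that an orthonormal $k$-frame $(v_1,\dots,v_k)$ in $\mathbb{R}^{d+1}$ projects to a $k$-tuple of pairwise distinct points in $\mathbb{R}P^d$, and this projection is compatible with the $W_k$-action in a precise way.

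First I would introduce the auxiliary space
\[
 \widetilde F(\mathbb{R}P^d,k):=\{(x_1,\dots,x_k)\in (S^d)^k : \pm x_i\neq \pm x_j \text{ for } i\neq j\},
\]
on which $W_k=(\Z/2)^k\rtimes\Sym_k$ acts freely by antipodal involutions on each factor and by permutation of factors. Quotienting by the subgroup $(\Z/2)^k$ gives $F(\mathbb{R}P^d,k)$, and further quotienting by $\Sym_k$ yields $\widetilde F(\mathbb{R}P^d,k)/W_k\cong B(\mathbb{R}P^d,k)$. The Stiefel manifold $V_k(\mathbb{R}^{d+1})$ embeds $W_k$-equivariantly as a closed invariant subspace of $\widetilde F(\mathbb{R}P^d,k)$ via the identity inclusion, since orthogonal unit vectors are in particular pairwise non-(anti)parallel.

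Next I would apply monotonicity of the Schwarz genus along this invariant inclusion, then Lemma~\ref{lemma:ls-by-gen}:
\[
 g_{W_k}(V_k(\mathbb{R}^{d+1}))\le g_{W_k}(\widetilde F(\mathbb{R}P^d,k))\le \cat\bigl(\widetilde F(\mathbb{R}P^d,k)/W_k\bigr)=\cat B(\mathbb{R}P^d,k).
\]
By Corollary~\ref{corollary:st-ls} applied with $n=d+1\ge 4$, the left-hand side equals $(d+1)k-\tfrac{k(k+1)}{2}+1=dk-\tfrac{k(k-1)}{2}+1$, which is exactly the desired bound.

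I do not expect a serious technical obstacle: the only things worth being careful about are (a) verifying that the $W_k$-action on $\widetilde F(\mathbb{R}P^d,k)$ is indeed free (which is immediate, since the defining condition already rules out fixed points under sign changes and permutations) and (b) checking that the ordinary unordered configuration space $B(\mathbb{R}P^d,k)$ is correctly identified with the $W_k$-quotient of $\widetilde F(\mathbb{R}P^d,k)$, rather than some auxiliary cover. Both are routine, so the real content of the argument is packaged inside Theorem~\ref{theorem:st-top-coh}, and the present corollary is essentially a monotonicity consequence of it.
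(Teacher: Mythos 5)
Your proposal is correct and follows essentially the same route as the paper: your space $\widetilde F(\mathbb{R}P^d,k)$ is exactly the paper's $\Z/2$-equivariant configuration space $F_{\Z/2}(S^d,k)$, and you use the same $W_k$-equivariant inclusion of $V_k(\R^{d+1})$, the same monotonicity of the Schwarz genus, and the same appeal to Lemma~\ref{lemma:ls-by-gen} and Corollary~\ref{corollary:st-ls}. If anything, your write-up is slightly more careful than the paper's (whose displayed homeomorphism has a notational slip, writing $B_{\Z/2}(S^d,k)/W_k$ where $F_{\Z/2}(S^d,k)/W_k$ is meant), since you explicitly verify freeness of the $W_k$-action and the identification with $B(\mathbb{R}P^d,k)$.
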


\begin{proof}
Let $\Z/2$ act antipodaly on the sphere $S^{d}$.
The group $W_k$ acts naturally on the $\Z/2$-configuration space $F_{\Z/2}(S^{d},k)$.
Then there is a homeomorphism:
\[
 B(\R P^d,k)\approx B_{\Z/2}(S^d,k)/W_k.
\]
Since $V_k(\R^{d+1})\subset B_{\Z/2}(S^d,k)$, then the monotonicity of the Schwarz genus under equivariant inclusion (see~\cite{schw1966} or \cite{vol2000}) implies:
\[
g_{W_k}(B_{\Z/2}(S^d,k)) \ge g_{W_k}(V_k(\R^{d+1})) = (d+1)k - \frac{k(k+1)}{2} + 1 = dk - \frac{k(k-1)}{2} + 1.
\] 
Application of Lemma~\ref{lemma:ls-by-gen} concludes the proof.
\end{proof}

\begin{remark}
For $k=2$ the space $B_{\Z/2}(S^d,k)$ is $W_k$-equivariantly homotopy equivalent to $V_k(\R^{d+1})$ and therefore we have the equality:
\[
\cat B_2(\R P^d) = 2d.
\]
\end{remark}

\begin{remark}
Results of~\cite{kar2009conf}, with more detailed proofs in~\cite{BlLuZi2012}, give a lower bound for the Lusternik--Schnirelmann category of $B(M,k)$ in the form $(d-1)(k - D_p(k)) + 1$
for a $d$-dimensional manifold $M$ and $D_p(k)$ the sum of digits in the $p$-adic expansion of $k$ for the prime $p$.
Thus, for small $k$ the bound from Corollary~\ref{corollary:proj-conf-ls} improves this general bound.
\end{remark}

\section{Counting geometric configurations}
\label{section:counting}

In this section we use the results of Section \ref{section:schwarzgenus} in order to count different geometric configurations: critically outscribed parallelotopes and Birkhoff--James orthonormal bases.

\subsection{Critically outscribed parallelotopes}
\label{subsection:parallelotpes}
Now we are going to consider parallelotopes outscribed around a given convex body and estimate the possible number of critical ones.

A \emph{parallelotope} is a convex polytope $P$ in $\R^n$, bounded by $2n$ pairwise distinct hyperplanes
\[
 H_1^0, H_1^1, H_2^0, H_2^1,\ldots, H_n^0, H_n^1
\]
such that $H_i^0$ and $H_i^1$ are parallel for every $i$, and $H_1^0,H_2^0,\ldots,H_n^0$ are in general position.
The class of parallelotopes naturally extends the class of parallelograms from the plane to higher dimensions.

\begin{definition}
\label{definition:critical}
A parallelotope $P\subset \R^n$ is \emph{critically outscribed} around the convex body $K$ if:
\begin{compactenum}
\item $K\subseteq P$, and
\item for every pair of its parallel defining hyperplanes $H_i^0$ and $H_i^1$ there exist points $x_0\in H_i^0\cap K$ and $x_1\in H_i^1\cap K$ such that the line determined by $x_0$ and $x_1$ is
parallel with all the remaining hyperplanes $H_j^0$, $H_j^1$ where $j\neq i$.
\end{compactenum}
\end{definition}

The notion of a critically outscribed parallelotope arises naturally in the context of finding parallelotopes of minimal volume containing convex body $K$.
Every parallelotope containing $K$ that has the minimal volume must be critically outscribed.
Thus, it is important to estimate the possible number of critically outscribed parallelotope around a convex body.

\begin{corollary}
\label{corollary:outscr-paral}
Every strictly convex body $K\subset \mathbb R^n$ has at least $\tfrac{n(n-1)}{2}+1$ distinct critically outscribed parallelotopes.
\end{corollary}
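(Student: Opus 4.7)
The plan is to realize critically outscribed parallelotopes as critical points of a natural volume function on a configuration space whose Lusternik--Schnirelmann category has already been computed, by combining Corollary~\ref{corollary:st-ls} with the monotonicity of the Schwarz genus emphasized in Section~\ref{section:schwarzgenus}.

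First I would fix the parameter space. Strict convexity of $K$ ensures that each pair of parallel support hyperplanes of $K$ is uniquely determined by its common unit outer normal (up to sign); hence an outscribed parallelotope is recorded by an unordered $n$-tuple of pairwise linearly independent lines in $\R^n$. Let
\[
\widetilde X \;:=\; \{(u_1,\ldots,u_n)\in (S^{n-1})^n : u_1,\ldots,u_n \text{ are linearly independent}\},
\]
a $W_n$-invariant open submanifold on which the action is free, and set $X:=\widetilde X/W_n$. The equivariant inclusion $V_n(\R^n)\hookrightarrow \widetilde X$, monotonicity of the Schwarz genus, Corollary~\ref{corollary:st-ls} with $k=n$, and Lemma~\ref{lemma:ls-by-gen} together give
\[
\cat X \;\geq\; g_{W_n}(\widetilde X) \;\geq\; g_{W_n}(V_n(\R^n)) \;=\; \tfrac{n(n-1)}{2}+1.
\]

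Next I would introduce the relevant function. Writing $h_K(u)=\max_{x\in K}\langle x,u\rangle$ and $w_K(u)=h_K(u)+h_K(-u)$, and letting $x^\pm(u)$ denote the unique support points of $K$ with outer normals $\pm u$ (well defined and continuous because strict convexity forces $h_K$ to be $C^1$), the parallelotope outscribed around $K$ with support-normal directions $u_1,\ldots,u_n$ has volume
\[
f(u_1,\ldots,u_n) \;=\; \frac{\prod_{i=1}^n w_K(u_i)}{|\det(u_1,\ldots,u_n)|}.
\]
This expression is $W_n$-invariant, so it descends to a $C^1$ function on $X$; it is bounded below and proper, because $|\det(u_1,\ldots,u_n)|\to 0$ as one approaches the boundary of $\widetilde X$. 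A Lagrange-multiplier computation on the constraints $|u_i|=1$ pins down the critical equations as
\[
x^+(u_i)-x^-(u_i) \;=\; w_K(u_i)\,(U^{-T})_i \qquad (i=1,\ldots,n),
\]
where $U=[u_1|\cdots|u_n]$ and $(U^{-T})_i$ denotes the unique vector orthogonal to every $u_j$ with $j\neq i$; the naive Lagrange multipliers automatically vanish because both sides of the raw stationarity equation have inner product $1$ with $u_i$. This is exactly Definition~\ref{definition:critical}. Applying the classical Lusternik--Schnirelmann theorem to $f$ on $X$ now produces at least $\tfrac{n(n-1)}{2}+1$ distinct critical points, hence the claimed number of critically outscribed parallelotopes.

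The main obstacle is the regularity of $f$ and the applicability of Lusternik--Schnirelmann theory on a non-compact manifold. Strict convexity of $K$ yields only $C^1$-smoothness of $h_K$, and one must either run a $C^1$-Palais--Smale argument (available because $f$ is proper and bounded below on $X$) or first verify the conclusion for $C^\infty$ strictly convex $K$ and then approximate a general strictly convex $K$ by smooth ones, checking that critically outscribed parallelotopes of the approximants accumulate on critically outscribed parallelotopes of $K$. Both routes are standard in the circle of ideas used for the billiard problem cited in the introduction.
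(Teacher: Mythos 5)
Your proposal is correct and follows essentially the same route as the paper: the same manifold of unit (not necessarily orthogonal) frames with its free $W_n$-action, the same chain $\cat X \ge g_{W_n}(\widetilde X) \ge g_{W_n}(V_n(\R^n)) = \tfrac{n(n-1)}{2}+1$ via genus monotonicity and Corollary~\ref{corollary:st-ls}, the same proper volume functional, and the same identification of critical points with Definition~\ref{definition:critical} (you via the explicit formula $\prod_i w_K(u_i)/|\det U|$ and vanishing Lagrange multipliers, the paper via a one-variable support-function computation after normalizing $\langle e_1, v\rangle = 1$). Your closing caveat about $h_K$ being only $C^1$ for a merely strictly convex $K$ is a genuine point that the paper's proof glosses over by calling $f$ smooth, and either of your proposed remedies (a $C^1$ Palais--Smale argument or approximation by smooth strictly convex bodies) suffices to close it.
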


\begin{proof}
Consider the manifold $M$ of all $n$-frames $(e_1,\ldots, e_n)$, with unit vectors $e_i$ not necessarily orthogonal to each other.
The manifold $M$ contains $V_n(\R^n)$ as a $W_n$-equivariant subspace.
Therefore by Corollary~\ref{corollary:st-ls} and the monotonicity of the Schwarz genus we have that
\[
\cat M/W_n \ge g_{W_n} (M) \ge g_{W_n} (V_n(\R^n)) = \tfrac{n(n-1)}{2}+1.
\]

Consider a function $f\colon M\to\R$ defined as follows.
For any $n$-frame $(e_1,\ldots, e_n)\in M$, find the support hyperplanes $H_i^0, H_i^1$ to $K$ orthogonal to $e_i$ such that $\langle e_i, H_i^1\rangle > \langle e_i, H_i^0\rangle$.
Here $\langle\cdot,\cdot\rangle$ denotes the standard inner product of $\R^n$
These hyperplanes bound a parallelotope $P(e_1,\ldots,e_n)$.
Put
\[
f(e_1, \ldots, e_n) := \vol P(e_1,\ldots,e_n).
\]
The function $f\colon M\to\R$ is a smooth $W_n$-invariant proper function on $M$.
Here proper function means that the preimage of every compact set is also compact.
Thus, $f$ has at least $\cat M/W_n$ critical $W_n$-orbits by the standard Lusternik--Schnirelmann theory, \cite[Theorem 1.15, page 7]{CLOT}.

Finally, we note that distinct critical orbits of the function $f\colon M\to\R$ correspond to the distinct critically outscribed parallelotopes around $K$.
Indeed, consider the dependence of $\vol P(e_1,\ldots, e_n)$ on some $e_i$.
Without loss of generality let it be $e_1$.
The support hyperplanes to $K$ with fixed normals $\pm e_2,\ldots, \pm e_n$ form a cylinder $Z$.
Now varying supporting hyperplanes $H_1^0$ and $H_1^1$ with normals $-e_1$ and $e_1$ cut the parallelotope $P(e_1,\ldots, e_n)$ out of the cylinder $Z$.
Let $\ell$ be the unique line orthogonal to $e_2,\ldots,e_n$.
If we only vary $e_1$ then the volume $\vol P(e_1,\ldots, e_n)$ is proportional to the length $L(e_1)$ of the segment cut on $\ell$ by the hyperplanes $H_1^0$ and $H_1^1$.
Parameterizing $\ell$ by $t\cdot v$ for some unit vector $v$ and $t\in\R$ and introducing the support function
\[
s(K, y) = \sup_{x\in K} \langle y, x\rangle
\]
we obtain the expression
\[
L(e_1) = \frac{s(K, e_1)}{\langle v, e_1\rangle} - \frac{s(K, -e_1)}{\langle v, -e_1\rangle}.
\]
This expression does not depend on the length of $e_1$.
Thus we can normalize $e_1$ by $\langle e_1, v \rangle = 1$ instead of $\|e_1\| = 1$.
Under this constraint the expression simplifies to $L(e_1) = s(K, e_1) + s(K, -e_1)$.
Recall that the directional differential of the support function can be identified in the case of strict convexity with the support point.
For $s(K, e_1)$ this is $p_+ = H_1^0+\cap K$ and for $s(K, -e_1)$ this is $p_- = H_1^1-\cap K$.
Hence
\[
dL(e_1) = p_+ - p_-.
\]
Recalling the constraint $\langle v, e_1\rangle=1$ we see that the condition for a critical point is that $p_+ - p_-\parallel \ell$.
Applied for each base vector $e_i$ this gives the definition of a critically outscribed parallelotope.
\end{proof}

\begin{remark}
Using the general Stiefel manifold $V_k(\R^n)$ it is possible to generalize Corollary~\ref{corollary:outscr-paral} to $(n-k)$-cylinders over $k$-dimensional parallelotopes.
An $(n-k)$-cylinder is a convex regions in $\R^n$ bounded by $2k$ pairwise distinct hyperplanes $H_1^0, H_1^1,\ldots, H_k^0, H_k^1$ that are parallel in pairs, $H_i^0 \parallel H_i^1$ ,
and $H_1^0,\ldots,H_n^0$ are in general position.
Each such convex region has a $k$-dimensional cross-section volume.
We see that among the $(n-k)$-cylinders over $k$-dimensional parallelotopes outscribed around the convex body $K\subset \R^n$ there are at least $nk - \frac{k(k+1)}{2} + 1$ critical ones
with respect to the cross-section volume.
This ``criticality'' may be described in terms of segments between opposite support points, like in Definition~\ref{definition:critical}.
\end{remark}

\subsection{Birkhoff--James orthogonal bases}
\label{subsection:Birkhoff--James}

Similarly, as in Corollary~\ref{corollary:outscr-paral}, we estimate the number of Birkhoff--James orthonormal bases in a finite dimensional normed space.
Assume that the norm $\|\cdot\|$ in $\R^n$ is smooth.

\begin{definition}
The vectors $x\in\R^n$ and $y\in\R^n$ are \emph{Birkhoff--James orthogonal} with respect to $\|\cdot \|$, and denoted by $x\perp_{BJ} y$, if $\tfrac{d}{dt} \| x + ty\| _{t=0} = 0$.
\end{definition}

This relation needs not to be symmetric, that is $x\perp_{BJ} y$ is not equivalent to $y\perp_{BJ} x$.
In fact, for $n\ge 3$, the Birkhoff--James orthogonality is symmetric if and only if the norm is Euclidean.
When considering a Birkhoff--James orthogonal basis $(e_1,\ldots, e_n)$ of $\R^n$ it is important to be aware that $e_i \perp_{BJ} e_j$ and $e_j \perp_{BJ} e_i$ are two different conditions.

\begin{corollary}
\label{corollary:bj-bases}
Every smooth norm on  $\R^n$ has at least $\tfrac{n(n-1)}{2}+1$ Birkhoff--James orthonormal bases.
\end{corollary}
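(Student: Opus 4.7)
The plan is to apply Corollary~\ref{corollary:outscr-paral} to the dual unit ball of the given norm and translate the critically outscribed parallelotopes it produces into Birkhoff--James orthonormal bases. Let $K^*\subset\R^n$ denote the unit ball of the dual norm, where linear functionals are identified with vectors via the standard inner product. By the classical polarity duality between smoothness and strict convexity for centrally symmetric convex bodies, smoothness of the norm $\|\cdot\|$ is equivalent to strict convexity of $K^*$. Thus Corollary~\ref{corollary:outscr-paral} applied to $K^*$ yields at least $\tfrac{n(n-1)}{2}+1$ distinct critically outscribed parallelotopes around $K^*$.

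Next I translate each such parallelotope into a Birkhoff--James orthonormal basis. Given a critically outscribed parallelotope around $K^*$ with outward face normals $\pm e_1,\ldots,\pm e_n$, I rescale each $e_i$ so that $\|e_i\|=1$; this positive rescaling leaves the bounding hyperplanes untouched. By strict convexity and central symmetry of $K^*$, the support points of $K^*$ in the directions $\pm e_i$ form a unique antipodal pair $\phi_i^+$ and $\phi_i^-=-\phi_i^+$, and smoothness of the norm identifies $\phi_i^+$ with the gradient $\nabla\|\cdot\|_{e_i}\in\R^n$. The critical condition that the chord from $\phi_i^+$ to $\phi_i^-$ be parallel to the remaining face hyperplanes then reads $\langle e_j,\phi_i^+\rangle=\tfrac{d}{dt}\|e_i+te_j\|_{t=0}=0$ for every $j\neq i$, which is exactly $e_i\perp_{BJ}e_j$.

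Each parallelotope is determined by the unordered set $\{\pm e_1,\ldots,\pm e_n\}$ of face-normal lines, and a Birkhoff--James orthonormal basis, taken modulo the natural $W_n$-action of permutations and sign changes, encodes precisely the same unordered datum. Therefore the translation above is a bijection, and the lower bound $\tfrac{n(n-1)}{2}+1$ carries over from parallelotopes to bases.

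The main obstacle I anticipate is the duality step: verifying carefully that the support functional of $K^*$ in the direction $e_i$ coincides with the differential of the primal norm at $e_i$, and that smoothness of $\|\cdot\|$ indeed forces strict convexity of $K^*$. Both are standard convex-analytic facts but require attention to the identification $(\R^n)^*\cong\R^n$ via the inner product and to the Legendre-type relation between a one-homogeneous convex function and its polar dual.
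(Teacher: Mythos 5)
Your proof is correct, but it takes a genuinely different route from the paper. The paper proves the corollary directly: it runs the Lusternik--Schnirelmann argument on the manifold $M$ of bases $(e_1,\ldots,e_n)$ with $\|e_i\|=1$ in the given norm, using the function $f(e_1,\ldots,e_n)=1/|\det(e_1,\ldots,e_n)|$, and then checks by differentiating along the elementary shears $E_{ij}(t)$ that every critical point of $f$ is a Birkhoff--James orthonormal basis. You instead reduce the statement to Corollary~\ref{corollary:outscr-paral} by polarity: smoothness of $\|\cdot\|$ is equivalent to strict convexity of the dual ball $K^*$ (since the support function of $K^*$ is the primal norm, and differentiability of the support function away from $0$ is equivalent to strict convexity), and the critical-chord condition of Definition~\ref{definition:critical}, combined with central symmetry ($\phi_i^-=-\phi_i^+$) and the identity $\tfrac{d}{dt}\|e_i+te_j\|_{t=0}=\langle \nabla\|\cdot\|(e_i),e_j\rangle=\langle \phi_i^+,e_j\rangle$, translates exactly into $e_i\perp_{BJ}e_j$ for all $j\neq i$. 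Your translation and injectivity argument are sound (for the lower bound you only need that distinct circumscribed parallelotopes have distinct normal-line sets, which holds because the support hyperplanes of $K^*$ are determined by the normal directions; the full bijection you claim is also true but not needed). It is worth noting that the two proofs are dual to one another rather than merely analogous: the parallelotope circumscribed about $K^*$ with normals $e_i$ is $\{x:|\langle e_i,x\rangle|\le \|e_i\|\}$, so after normalizing $\|e_i\|=1$ its volume is $2^n/|\det(e_1,\ldots,e_n)|$, i.e.\ your reduction recovers the paper's function $f$ up to a constant factor, which explains why both corollaries carry the same bound $\tfrac{n(n-1)}{2}+1$. What your approach buys is economy --- Corollary~\ref{corollary:outscr-paral} is used as a black box and no second critical-point computation is needed --- at the price of the standard but nontrivial convex-duality facts you correctly flag; the paper's direct argument avoids duality entirely and is self-contained given the LS-category bound for $M/W_n$.
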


\begin{proof}
Consider again the manifold $M$ of all bases $(e_1, \ldots, e_n)$ of $\R^n $such that $\|e_1\| = \dots = \|e_n\| = 1$.
The function $f\colon M\to\R$ defined by
\[
f(e_1, \ldots, e_n): = \tfrac{1}{|\det (e_1, \ldots, e_n)|}
\]
is proper, smooth, and $W_n$-invariant on $M$.
Hence it has at least $\cat M/W_n = \frac{n(n-1)}{2}+1$ critical points.

Now, for a pair of distinct indexes $(i,j)$ and the fixed basis $(e_1,\ldots, e_n)$ we consider the function $\phi_{(i,j)}\colon\R\to\R$ defined by
\[
\phi_{(i,j)}(t): = \tfrac{1}{\det \left(e_1, \ldots, \frac{e_i + t e_j}{\|e_i + t e_j\|}, \ldots, e_n\right)}.
\]
The first derivative of this function evaluated at $t=0$ is
\[
\phi_{(i,j)}'(0)
=
\Big(\tfrac{\|e_i + t e_j\|}{\det (e_1, \ldots, e_i + t e_j, \ldots, e_n)}\Big)'_{t=0}
=
\Big(\tfrac{\|e_i + t e_j\|}{\det (e_1, \ldots, e_i, \ldots, e_n)}\Big)'_{t=0}
=
\tfrac{\|e_i + t e_j\|'_{t=0}}{\const}.
\]

Let $E_{ij}(t)\in \mathrm{GL}(n)$ be the elementary matrix with $t$ at position $(i,j)$, units on the diagonal, and zeroes elsewhere.
Then $\phi'_{(i, j)}(0)=\tfrac{d}{dt}f((e_1,\ldots, e_n)\cdot E_{ij}(t))$, where ``$\cdot$'' denotes the right action defined in Section \ref{subsection:2.2}.
For a critical point of $f$, all such derivatives must vanish and thus a critical point of the function $f$ is also a Birkhoff--James orthonormal basis.
This concludes the proof.
\end{proof}

\end{document}